\begin{document}

\author{Dragomir \v Sari\' c}
\thanks{This research was partially supported by National Science Foundation grant DMS 1102440.}

\address{Department of Mathematics, Queens College of CUNY,
65-30 Kissena Blvd., Flushing, NY 11367}
\email{Dragomir.Saric@qc.cuny.edu}

\address{Mathematics PhD. Program, The CUNY Graduate Center, 365 Fifth Avenue, New York, NY 10016-4309}

\theoremstyle{definition}

 \newtheorem{definition}{Definition}[section]
 \newtheorem{remark}[definition]{Remark}
 \newtheorem{example}[definition]{Example}

\newtheorem*{notation}{Notation}

\theoremstyle{plain}

 \newtheorem{proposition}[definition]{Proposition}
 \newtheorem{theorem}[definition]{Theorem}
 \newtheorem{corollary}[definition]{Corollary}
 \newtheorem{lemma}[definition]{Lemma}

\newcommand{\eps}{\varepsilon}
\newcommand{\G}{\Gamma}
\newcommand{\g}{\gamma}
\newcommand{\D}{\Delta}
\renewcommand{\d}{\delta}
\newcommand{\dist}{\mathrm{dist}}
\newcommand{\m}{\mathrm{mod}}

\title{Earthquakes in the length-spectrum Teichm\"uller spaces}

\subjclass{}

\keywords{}
\date{\today}

\maketitle

\begin{abstract}
Let $X_0$ be a complete hyperbolic surface of infinite type that has a geodesic pants decomposition with cuff lengths bounded above. The length spectrum Teichm\"uller space $T_{ls}(X_0)$ consists of homotopy classes of hyperbolic metrics on $X_0$ such that the ratios of the corresponding simple closed geodesic for the hyperbolic metric on $X_0$ and for the other hyperbolic metric are bounded from the below away from $0$ and from the above away from $\infty$ (cf. \cite{ALPS}).
This paper studies earthquakes in the length spectrum Teichm\"uller space $T_{ls}(X_0)$. 
We find a necessary condition and several sufficient conditions on earthquake measure $\mu$ such that the corresponding earthquake $E^{\mu}$ describes the hyperbolic metric on $X_0$ which is in the length spectrum Teichm\"uller space. Moreover, we give examples of earthquake paths $t\mapsto E^{t\mu}$, for $t\geq 0$, such that $E^{t\mu}\in T_{ls}(X_0)$ for $0\leq t<t_0$, $E^{t_0\mu}\notin T_{ls}(X_0)$ and $E^{t\mu}\in T_{ls}(X_0)$ for $t>t_0$.
\end{abstract}

\section{Introduction}

The length spectrum Teichm\"uller space $T_{ls}(X_0)$ of a complete hyperbolic surface $X_0$ of infinite type consists of all homotopy classes of marked complete hyperbolic surfaces whose length spectrum is comparable to the length spectrum of the base point $X_0$ (cf. \cite{ALPS} and \S \ref{sec:2}). A reasonable assumption on $X_0$ (cf. \cite{ALPS}, \cite{Shiga}) is that there exists a geodesic pants decomposition $\mathcal{P} =\{\alpha_n\}$ and a constant $L_0>0$ such that
$$
l_{\alpha_n}(X_0)\leq L_0.
$$
Such a pants decomposition $\mathcal{P}$ is said to be {\it upper bounded} and from now on we assume that an upper bounded pants decomposition always exists.  An assignment of the lengths and the twists to the cuffs of $\mathcal{P}$ parametrizes the length spectrum Teichm\"uller space $T_{ls}(X_0)$ and the image of $T_{ls}(X_0)$ in these Fenchel-Nielsen coordinates is completely described (cf. \cite{ALPS}). Moreover, the Fenchel-Nielsen coordinates give a locally biLipschitz homeomorphism between $T_{ls}(X_0)$ and $l_{\infty}$ (cf. \cite{Sa3}). 

We study earthquakes in the length spectrum Teichm\"uller space $T_{ls}(X_0)$. Unlike many other deformations of hyperbolic structures on surfaces, earthquakes are completely described in terms of the initial hyperbolic structure (cf. Thurston \cite{Th1}). Thus one could say that earthquakes are natural in the context of the hyperbolic geometry. An earthquake is a deformation of a hyperbolic structure along the support of the earthquake (a geodesic lamination on the initial surface) by the amount given by the earthquake measure (a transverse measure to the earthquake support). Any two hyperbolic structures on a fixed surface are said to be {\it continuous} if the lift to the universal coverings of the identity map extends to a continuous map of the boundary circles (cf. Thurston \cite{Th1}). 
Thurston defined earthquakes and proved that any two continuous hyperbolic structures are related by a unique earthquake (cf. \cite{Th1}).  The upper bounded geodesic pants decomposition guarantees that the hyperbolic metric on $X_0$ and on the new marked hyperbolic surface are continuous and thus related by an earthquake. We study under which conditions on an earthquake measure $\mu$ the image hyperbolic structure $E^{\mu}(X_0)$ (under the corresponding earthquake $E^{\mu}$) is in the length spectrum Teichm\"uller space $T_{ls}(X_0)$.

Let $E^{\mu}:X_0\to X^{\mu}$ be an earthquake from the hyperbolic surface $X_0$ (which has an upper bounded pants decomposition) onto another hyperbolic surface $X^{\mu}$, where the measured geodesic lamination $\mu$ is the earthquake measure of $E^{\mu}$.  
Define the {\it length spectrum norm} of $\mu$ by
$$
\|\mu\|_{ls}=\sup_{\beta\in\mathcal{S}}\frac{\mu (\beta )}{l_{\beta}(X_0)},
$$
where $\mathcal{S}$ is the set of all simple closed geodesics on $X_0$ and $\mu (\beta )$ is the total $\mu$-mass deposited on $\beta\in\mathcal{S}$. 
A necessary condition for $E^{\mu}:X_0\to X^{\mu}$ to belong to the length spectrum Teichm\"uller space $T_{ls}(X_0)$ is that $\|\mu\|_{ls}<\infty$ (cf. \S \ref{sec:upper}). Such $\mu$ is said to be {\it length spectrum bounded}. However, it turns out that this condition is not sufficient (cf. \S \ref{sec:concl}). In fact, an earthquake path $t\mapsto (E^{t\mu}:X_0\to X^{t\mu})$, for $t\geq 0$, can start in $T_{ls}(X_0)$ leave $T_{ls}(X_0)$ at some finite time $t_0$ and return afterwards to $T_{ls}(X_0)$. Moreover, there exist length spectrum bounded earthquake measures $\mu$ such that the earthquake path $E^{t\mu}$ leaves and returns to $T_{ls}(X_0)$ infinitely many times (cf. \S \ref{sec:concl}).  

The only reason that $\|\mu\|_{ls}<\infty$ is not a sufficient condition for an earthquake $E^{\mu}:X_0\to X^{\mu}$ to belong to $T_{ls}(X_0)$ is that the lengths of simple closed curves might decrease too much under the earthquake thus making the ratio of lengths of the corresponding simple closed curves on $X_0$ and $X^{\mu}$ too small. We give sufficient conditions on $\mu$ in order to guarantee that $E^{\mu}$ is in $T_{ls}(X_0)$.

Let $\alpha_n$ be a cuff in the upper bounded geodesic pants decomposition of $X_0$ and let $P_n^1,P_n^2$ be the two pairs of pants in $\mathcal{P}$ with the common cuff $\alpha_n$. For a leaf $g$ of the support of $\mu$ that intersects $\alpha_n$, denote by $g_{comp}$ a component of $g\cap (P_n^1\cup P_n^2)$. 
The {\it winding number} $w_{\alpha_n}(g_{comp})$ of $g_{comp}$ around the curve 
$\alpha_n$ is defined in \S \ref{sec:lower-bound} and can be essentially thought of as the 
amount of winding of $g_{comp}$ around $\alpha_n$. The quantity $w_{\alpha_n}(g_{comp})$ 
becomes crucial when the angle between $\alpha_n$ and $g$ is larger than $\frac{\pi}{2}$. 
We have the following (cf. Theorem \ref{thm:bound-on-cuffs} and \S \ref{sec:upper})

\vskip .2 cm

\noindent {\bf Theorem 1.} {\it Let $X_0$ be a complete hyperbolic surface which has a geodesic pants decomposition $\mathcal{P} =\{\alpha_n\}_{n\in\mathbb{N}}$ such that
$$
l_{\alpha_n}(X_0)\leq L_0
$$
for some fixed $L_0>0$. Let $\mu$ be a measured (geodesic) lamination on $X_0$ such that
$$
\|\mu\|_{ls}<\infty .
$$
If there exist $C_0>1$, $C_0'>C(L_0,\|\mu\|_{ls})\geq 1$ for the constant $C(L_0,\|\mu\|_{ls})$ from Lemma \ref{lem:tr-lengh-earthquake-measure} and $C_1>0$ such that (for each cuff $\alpha_n$ of $\mathcal{P}$) $\mu$ satisfies one of the following:
\begin{enumerate}
\item $\mu (\alpha_n )>C_0l_{\alpha_n}(X_0)$
\item $\mu (\alpha_n )<\frac{1}{C_0'}l_{\alpha_n}(X_0)$
\item  the angle between $\alpha_n$ and a leaf $g$ of $\mu$ is less than or equal to $\frac{\pi}{2}$
\item $\frac{1}{C_0'}l_{\alpha_n}(X_0)\leq \mu (\alpha_n )\leq C_0l_{\alpha_n}(X_0)$, the angle between $\alpha_n$ and a leaf $g$ of $\mu$ is greater than $\frac{\pi}{2}$ and $w_{\alpha_n}(g_{comp})<C_1\frac{1}{l_{\alpha_n}(X_0)}$
\end{enumerate}
then the earthquake 
$$
E^{\mu}:X_0\to X^{\mu}
$$
belongs to the length spectrum Teichm\"uller space $T_{ls}(X_0)$.
}

\vskip .2 cm

The above theorem facilitates finding sufficient conditions on $\mu$ such that the whole earthquake path $t\mapsto E^{t\mu}$, for $t\geq 0$, stays in the length spectrum Teichm\"uller space $T_{ls}(X_0)$. We have (cf. Theorems \ref{thm:earthquake-paths} and \ref{thm:partition_of_cuffs})

\vskip .2 cm

\noindent
{\bf Theorem 2.} {\it
Let $X_0$ be a complete hyperbolic surface with an upper bounded geodesic pants decomposition $\mathcal{P}=\{\alpha_n\}$ and let $\mu$ be a measured geodesic lamination on $X_0$ with
$$
\|\mu\|_{ls}<\infty.
$$
Then $E^{t\mu}(X_0)=X^{t\mu}\in T_{ls}(X_0)$ for all $t\geq 0$ if there exists $C>0$ such that for each $\alpha_n$ one of the following holds:
\begin{enumerate}
\item the angle between $\alpha_n$ and a leaf of $\mu$ is less than $\frac{\pi}{2}$,
\item the angle between $\alpha_n$ and a leaf of $\mu$ is greater than $\frac{\pi}{2}$, and $w_{\alpha_n}(g_{comp})\leq C\frac{1}{l_{\alpha_n}(X_0)}$.
 \end{enumerate}
In addition, $E^{t\mu}(X_0)=X^{t\mu}\in T_{ls}(X_0)$ for all $t\geq 0$ if
$\mathcal{P}$ can be partitioned into $\mathcal{P}'$ and $\mathcal{P}''$ such that each $\alpha_n\in\mathcal{P}'$ satisfies either {\rm (1)} or {\rm (2)} for a fixed $C>0$,
and that for $\alpha_n\in\mathcal{P}''$
$$
l_{\alpha_n}(X_0)\to 0
$$
and
$$
\frac{\mu (\alpha_n)}{l_{\alpha_n}(X_0)}\to 0
$$
as $n\to\infty$.
}

\section{The Fenchel-Nielsen coordinates}
\label{sec:2}

Let $X_0$ be a complete hyperbolic surface without boundary of infinite type. Assume that there exists $L_0>0$ and a geodesic pants decomposition $\mathcal{P}=\{\alpha_n\}$ of $X_0$ such that for each $\alpha_n\in\mathcal{P}$
$$
l_{\alpha_n}(X_0)\leq L_0,
$$
where $l_{\alpha_n}(X_0)$ is the length of the geodesic representative of the curve $\alpha_n$ in the hyperbolic metric on $X_0$.

Consider the set of all homeomorphisms
$$
h:X_0\to X
$$
such that there exists $L>0$ with
$$
\frac{1}{L}\leq\frac{l_{\beta}(X)}{l_{\beta}(X_0)}\leq L
$$
for all simple closed geodesics $\beta\in\mathcal{S}$, where $l_{\beta}(X)$ is the length of the geodesic representative of $h(\beta )$ on $X$. The {\it length spectrum Teichm\"uller space} $T_{ls}(X_0)$ of the surface $X_0$ consists of all equivalence classes of the above homeomorphisms, where $(h,X)$ is equivalent to $(h',X')$ if there exists an isometry $I:X\to X'$ such that $(h')^{-1}\circ I\circ h:X_0\to X_0$ is homotopic to the identity with a bounded homotopy (cf. \cite{ALPS}).

The Fenchel-Nielsen coordinates on infinite type surfaces are defined in the same fashion as on the finite type surfaces (cf. \cite{ALPS}). 
We recall a characterization of the length spectrum Teichm\"uller space $T_{ls}(X_0)$ in terms of the Fenchel-Nielsen coordinates (cf. \cite{ALPS}, \cite{Sa3}):

\begin{theorem}
Let $X_0$  be an infinite type complete hyperbolic surface equipped with an upper bounded geodesic pants decomposition $\mathcal{P} =\{ \alpha_n\}_{n\in\mathbb{N}}$.  The normalized Fenchel-Nielsen coordinates
\begin{equation}
\label{eq:fn-norm}
F(X)=\Big{\{} \Big{(}\log \frac{l_{\alpha_n}(X)}{l_{\alpha_n}(X_0)},\frac{t_{\alpha_n}(X)-t_{\alpha_n}(X_0)}{\max\{ 1,|\log l_{\alpha_n}(X_0)|\}}\Big{)}\Big{\}}_{n\in\mathbb{N}}
\end{equation}
associated to each $X\in T_{ls}(X_0)$ induce a locally biLipschitz surjective homeomorphism
$$
F:T_{ls}(X_0)\to l^{\infty}.
$$
\end{theorem}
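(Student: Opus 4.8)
The plan is to separate the statement into its bijective content, which follows from the coordinate characterization of $T_{ls}(X_0)$ recalled in \cite{ALPS}, and its metric content, which is the genuinely quantitative part. First I would recall from \cite{ALPS} that a marked hyperbolic surface $X$ represents a point of $T_{ls}(X_0)$ if and only if
$$
\sup_{n}\Big|\log\frac{l_{\alpha_n}(X)}{l_{\alpha_n}(X_0)}\Big|<\infty
\quad\text{and}\quad
\sup_{n}\frac{|t_{\alpha_n}(X)-t_{\alpha_n}(X_0)|}{\max\{1,|\log l_{\alpha_n}(X_0)|\}}<\infty .
$$
This is precisely the assertion that $F(X)\in l^{\infty}$, so $F$ is well defined; surjectivity is the converse construction in \cite{ALPS}, which assigns to an arbitrary bounded sequence of lengths and twists a hyperbolic surface with exactly those Fenchel--Nielsen coordinates, necessarily lying in $T_{ls}(X_0)$; and injectivity is immediate since Fenchel--Nielsen coordinates determine a marked hyperbolic structure up to the equivalence defining $T_{ls}(X_0)$. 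What remains is to show that the length spectrum metric on $T_{ls}(X_0)$ and the $l^{\infty}$ metric pulled back by $F$ are locally biLipschitz equivalent.

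Writing $w_n:=\max\{1,|\log l_{\alpha_n}(X_0)|\}$ for the collar weight and $d_{ls}$ for the length spectrum metric, I would first bound the coordinate differences by $d_{ls}$. The length coordinates are controlled directly, since each cuff $\alpha_n$ lies in $\mathcal S$ and hence $|\log(l_{\alpha_n}(X)/l_{\alpha_n}(Y))|\le 2d_{ls}(X,Y)$. For the twist coordinates I would fix, for each $\alpha_n$, an auxiliary simple closed geodesic $\beta_n\in\mathcal S$ crossing $\alpha_n$ once, whose crossing angle is bounded away from $\tfrac{\pi}{2}$ and whose length is comparable to $w_n$ (the collar lemma together with the upper bound $l_{\alpha_n}(X_0)\le L_0$ makes this possible uniformly). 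By Wolpert's derivative formula the length of $\beta_n$ varies with the twist along $\alpha_n$ at a rate bounded below in absolute value, so $|\log(l_{\beta_n}(X)/l_{\beta_n}(Y))|$ is bounded below by a multiple of $|t_{\alpha_n}(X)-t_{\alpha_n}(Y)|/w_n$; taking the supremum over the $\beta_n$ gives the desired bound of the normalized twist coordinates by $d_{ls}$. It is essential here that the statement is only local: globally the length of $\beta_n$ grows linearly in the twist distance while $d_{ls}$ grows only logarithmically, and it is exactly the restriction to a bounded neighborhood---where $|t_{\alpha_n}(X)-t_{\alpha_n}(Y)|/w_n$ stays small and $\log(1+x)\asymp x$---that turns this nonlinear relation into a biLipschitz one.

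The reverse inequality $d_{ls}(X,Y)\le C\,\|F(X)-F(Y)\|_{\infty}$, with $C$ uniform on the neighborhood, is the main obstacle, because it must control the length ratio of \emph{every} $\beta\in\mathcal S$, including arbitrarily long and complicated curves, by the single supremum of coordinate differences. My approach is to take a geodesic representative of $\beta$, decompose it into the arcs in which it traverses the pairs of pants and the collars of the cuffs it meets, and write $k_n$ for the number of times $\beta$ crosses $\alpha_n$. Right-angled hexagon trigonometry, the collar lemma, and the uniform bound $l_{\alpha_n}(X_0)\le L_0$ then give, on the one hand, $l_\beta(X)\asymp\sum_n k_n w_n$, and, on the other, an absolute bound $|l_\beta(X)-l_\beta(Y)|\lesssim\sum_n k_n\big(|\log(l_{\alpha_n}(X)/l_{\alpha_n}(Y))|+|t_{\alpha_n}(X)-t_{\alpha_n}(Y)|\big)$. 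Since $|t_{\alpha_n}(X)-t_{\alpha_n}(Y)|=w_n$ times the normalized twist coordinate difference and $w_n\ge 1$, dividing exhibits $|\log(l_\beta(X)/l_\beta(Y))|$ as a $k_nw_n$-weighted average of the normalized coordinate differences, hence bounded by their supremum $\|F(X)-F(Y)\|_{\infty}$ independently of $\beta$. The delicate point is precisely that the normalization by $w_n=\max\{1,|\log l_{\alpha_n}(X_0)|\}$ makes the weights in the numerator match those in $l_\beta\asymp\sum_n k_n w_n$, so that the arbitrarily large crossing numbers cancel in the ratio instead of accumulating; as before, the local restriction is what linearizes the relation.

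Finally I would assemble the estimates: the two Lipschitz inequalities show that $F$ and $F^{-1}$ are continuous and that, on each bounded neighborhood of a basepoint, $F$ is biLipschitz with constants depending only on that neighborhood and on $L_0$. Combined with the bijectivity and surjectivity taken from \cite{ALPS}, this proves that $F\colon T_{ls}(X_0)\to l^{\infty}$ is a locally biLipschitz surjective homeomorphism.
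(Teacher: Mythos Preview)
The paper does not contain a proof of this theorem. It is stated in \S\ref{sec:2} as a result quoted from the references \cite{ALPS} and \cite{Sa3} (the sentence immediately preceding the statement reads ``We recall a characterization of the length spectrum Teichm\"uller space $T_{ls}(X_0)$ in terms of the Fenchel-Nielsen coordinates (cf.\ \cite{ALPS}, \cite{Sa3})''), and it is used thereafter only as background input for the earthquake results. There is therefore nothing in this paper to compare your argument to; what you have written is a sketch of the proof that one would find in \cite{Sa3}, not a reconstruction of anything appearing here.

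As for the sketch itself, the overall architecture---bijectivity from \cite{ALPS}, then two local Lipschitz inequalities---is the right one, and your treatment of the reverse inequality via the decomposition $l_\beta\asymp\sum_n k_n w_n$ is on target. One point that would need more care in an actual proof is your control of the twist coordinates by $d_{ls}$: you posit a single auxiliary curve $\beta_n$ crossing $\alpha_n$ once with angle uniformly bounded away from $\tfrac{\pi}{2}$, but such a curve need not exist at every point of $T_{ls}(X_0)$ (the shortest transversal can meet $\alpha_n$ nearly orthogonally). The standard remedy is to use a \emph{pair} of transversals differing by a half-twist around $\alpha_n$, so that at least one of them has crossing angle bounded away from $\tfrac{\pi}{2}$; alternatively one integrates Wolpert's formula and exploits the convexity of $t\mapsto l_{\beta_n}$ along the twist path rather than a pointwise lower bound on $|\cos\theta|$. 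With that adjustment your outline matches the argument in \cite{Sa3}.
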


\section{The length spectrum bound on earthquake measures}

Let $X_0$ be a complete hyperbolic surface (without boundary) that has upper bounded geodesic pants decomposition $\mathcal{P}=\{\alpha_n\}$. Let $\mu$ be a measured lamination on $X_0$. Recall a definition of Thurston \cite{Th1}.

\begin{definition}
A measured lamination $\mu$ on $X_0$ is {\it Thurston bounded} is
$$
\|\mu\|_{Th}:=\sup_I\mu (I)<\infty
$$
where the supremum is over all geodesic arcs $I$ on $X_0$ of length $1$.
\end{definition}

We introduce a new notion of boundedness of measured laminations which is better suited for the length spectrum Teichm\"uller spaces.

\begin{definition}
A measured lamination $\mu$ is {\it length spectrum bounded} if
$$
\|\mu\|_{ls}:=\sup_{\beta}\frac{\mu (\beta )}{l_{\beta}(X_0)}<\infty
$$
where the supremum is over all simple closed geodesics $\beta$ on $X_0$.
\end{definition}

Consider a marking homeomorphism $h:X_0\to X$ for any $X\in T_{ls}(X_0)$. Then $h$ induces a homeomorphism $\tilde{h}$ of the boundaries of the universal coverings of $X_0$ and $X$. 
Note that $X_0$ is complete by the assumption. The surface $X$ has an upper bounded geodesic pants decomposition because $X\in T_{ls}(X_0)$ which implies that $X$ is a complete hyperbolic surface.
Since $X_0$ and $X$ are complete hyperbolic surfaces without the boundary the set of fixed points of the covering groups $G_0$ and $G$ are dense in the unit circle $S^1$. The map $\tilde{h}$ is first defined as a one to one correspondence between the fixed points of elements of $G_0$ and the fixed points of elements of $G$ which preserves the order on $S^1$. Therefore $\tilde{h}$ extends to a homeomorphism of $S^1$ which conjugates the action of $G_0$ onto the action of $G$.

Thurston \cite{Th1}
proved that each homeomorphism of the circle is realized as the continuous extension of an earthquake of the hyperbolic plane $\mathbb{H}^2$. Moreover if the homeomorphism of the circle is induced from a homeomorphism of $X_0$ to $X$ then the earthquake descends onto the surface $X_0$, namely the earthquake measure is invariant under the covering group of $X_0$ \cite{Th1}. 
An {\it earthquake measure} is a measured lamination $\mu$, and the earthquake $E^{\mu}:X_0\to X$ is a piecewise isometry on each stratum of $\mu$ (cf. \cite{Th1}).

Note that $t\mu$ for $t>0$ is also a measured lamination on $X_0$ which is simply obtained by scaling $\mu$ by a factor $t$. However, $E^{t\mu}$ is in general not an earthquake map even for $0<t<1$ \cite{GHL}. In fact, a map $E^{t\mu}:X_0\to E^{t\mu}(X_0)$ can be defined to be piecewise isometry on the strata of $t\mu$ and it turns out that its image $E^{t\mu}(X_0)$ does not have to be a complete surface. In this case, the lift of $E^{t\mu}$ to the universal coverings does not extend to a homeomorphism of the boundary circles.

When $\mu$ is Thurston bounded then the map $E^{\mu}:X_0\to E^{\mu}(X_0)$ is surjective and the image $E^{\mu}(X_0)=X_{\mu}$  is a complete hyperbolic surface that is quasiconformally equivalent to $X_0$ (cf. \cite{Th1}, \cite{GHL}, \cite{EMM}, \cite{Sa1}). Since $\| t\mu\|_{Th}=t\|\mu\|_{Th}<\infty$ if $\|\mu\|_{Th}<\infty$, it follows that the earthquake path $E^{t\mu}:X_0\to X^{\mu}$ is well-defined for all $t\geq 0$. Thus in $T_{qc}(X_0)$ every point is connected by an earthquake path to the base point $X_0$ and the topology of $T_{qc}(X_0)$ can be recovered from the uniform weak* topology on the space of earthquake measures on $X_0$ \cite{MiySa}. We consider the question whether in $T_{ls}(X_0)$ every point is connected to the base point by an earthquake path.

\section{A lower bound on the lengths of the cuffs}
\label{sec:lower-bound}

In this section we estimate the ratio $l_{\alpha_n}(X_0)/l_{\alpha_n}(X_{\mu})$. To achieve this, we prove a few preparatory lemmas. Let $L_0>0$ be such that 
$$
l_{\alpha_n}(X_0)\leq L_0.
$$

Let $C>0$ such that $\|\mu\|_{ls}<C$. Then
$$
\mu (\alpha_n)\leq Cl_{\alpha_n}(X_0)
$$
by the definition of the length spectrum norm on the measured laminations.

Let $P_1$ and $P_2$ be two geodesic pairs of pants with cuff lengths bounded by $L_0>0$ that are glued along a common cuff $\alpha$. Let $g$ be either a simple closed geodesic in $P_1\cup P_2$ or a simple geodesic arc in $P_1\cup P_2$ which is joining two boundary cuffs of $P_1\cup P_2$ and transversely intersecting $\alpha$. We define the {\it winding number} $w_{\alpha}(g)$ of the arc $g$ around the curve $\alpha$ as follows. Let $\gamma_i$, for $i=1,2$, denote the unique simple geodesic arc in $P_i$ which is orthogonal to $\alpha$ at both of its endpoints. Divide each $P_i$ into two right angled hexagons $\Sigma_i^j$ for $j=1,2$ by cutting along three geodesic arcs  orthogonal to pairs of cuffs of $P_i$. Let 
$$\gamma_i^j:=\gamma_i\cap \Sigma_i^j
$$
for $i,j=1,2$.  Note that the length of $\gamma_i^j$ is half the length of $\gamma_i$.

Let $g_s$, for $s=1,2,\ldots ,k$, be the components of the intersections $g\cap P_i$ for $i=1,2$.
If the angle from $\alpha_n$ to $g_s$ is greater than $\frac{\pi}{2}$, we define the {\it winding number} $w_{\alpha}(g)$ by 
$$
w_{\alpha}(g)=\max_{1\leq s\leq k;\ i,j=1,2} \#(g_s,\gamma_i^j)
$$
where $\#(g_s,\gamma_i^j)$ is the number of intersections of $g_s$ and $\gamma_i^j=\gamma_i\cap\Sigma_i^j$. Note that $w_{\alpha}(g)-\#(g_s,\gamma_i^j)\leq 2$ for all $s=1,2,\ldots ,k$.

In order to use the winding number, we need the following lemma.

\begin{lemma}
\label{lem:twisting_into_radius}
Consider the universal covering $\pi:\mathbb{H}^2\to X_0$ such that one lift $\tilde{\alpha}_n$ is the positive $y$-axis.
Let $\tilde{g}$ be a lift to $\mathbb{H}^2$ of a leaf $g$ of the measured lamination 
$\mu$ that intersects the positive $y$-axis between $i$ and $e^{l_{\alpha_n}(X_0)}i$. Denote by $k_1<0$ and $k_2>0$ the endpoints of $\tilde{g}$. Then
$$
1\leq -k_1k_2\leq e^{2L_0}.
$$
If the angle between $\alpha_n$ and $g_{comp}$ is greater than $\frac{\pi}{2}$, then
$$
-k_1\geq C(L_0)e^{-l_{\alpha_n}(X_0)w_{\alpha_n}(g_{comp})},
$$
where $g_{comp}$ is a component of $g\cap (P_1\cup P_2)$ and $P_1,P_2$ are two pairs of pants in $\mathcal{P}$ with a common cuff $\alpha_n$.
\end{lemma}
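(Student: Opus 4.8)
The plan is to carry out everything in the upper half-plane model of $\mathbb{H}^2$ in which $\tilde\alpha_n$ is the positive $y$-axis, so that the deck transformation associated to $\alpha_n$ is $A\colon z\mapsto e^{l_{\alpha_n}(X_0)}z$.

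First I would dispose of the elementary estimate $1\le -k_1k_2\le e^{2L_0}$. A complete geodesic of $\mathbb{H}^2$ with real endpoints $k_1<0<k_2$ is the Euclidean semicircle with diameter $[k_1,k_2]$, and a direct computation shows that it meets the $y$-axis at the point $i\sqrt{-k_1k_2}$. Since by hypothesis this point lies between $i$ and $e^{l_{\alpha_n}(X_0)}i$, one gets $1\le\sqrt{-k_1k_2}\le e^{l_{\alpha_n}(X_0)}\le e^{L_0}$, and squaring gives the first assertion.

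For the second inequality the idea is to translate the combinatorial winding number of $g_{comp}$ into a count, performed in $\mathbb{H}^2$, of how many consecutive $A$-translates of a lift of the half-seam $\gamma_i^j$ the geodesic $\tilde g$ is forced to cross before it can reach the endpoint $k_1$. Choose a lift $\tilde\gamma$ of $\gamma_i^j$ meeting the $y$-axis orthogonally at a height $h_0\in[1,e^{L_0}]$; since it is orthogonal to the $y$-axis it lies on the Euclidean circle $|z|=h_0$, and in fact it is the sub-arc of that circle issuing from $(0,h_0)$ into the side of the $y$-axis occupied by $P_i$ whose angular extent about the origin is bounded below by a positive constant $\delta_0(L_0)$ depending only on $L_0$ (a hexagon/collar-lemma estimate, using that all cuff lengths are $\le L_0$). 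The translates $A^m\tilde\gamma$ then lie on the circles $|z|=h_0e^{ml_{\alpha_n}(X_0)}$, are pairwise disjoint, and all have the same angular extent. Next, unwinding the angle convention of \S\ref{sec:lower-bound}, the hypothesis that the angle between $\alpha_n$ and $g_{comp}$ exceeds $\frac{\pi}{2}$ says exactly that $\tilde g$ is in the configuration $|k_1|<k_2$ (equivalently $k_1+k_2>0$, i.e. $\tilde g$ leans toward $+\infty$); an explicit parametrization then shows that the sub-arc of $\tilde g$ running from $i\sqrt{-k_1k_2}$ to $k_1$ stays on the $P_i$-side of the $y$-axis, has monotonically decreasing modulus from $\sqrt{-k_1k_2}$ down to $|k_1|$, and monotonically increasing angular coordinate from $\frac{\pi}{2}$ to $\pi$. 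Hence this sub-arc meets the circle $|z|=h$ exactly once for each $h\in(|k_1|,\sqrt{-k_1k_2})$, and a short bounded-geometry computation shows that it actually meets the \emph{arc} $A^m\tilde\gamma$ whenever $h_0e^{ml_{\alpha_n}(X_0)}$ lies in an interval of the form $[C_4(L_0)|k_1|,\ 1]$; counting lattice points, the number of such $m$ is at least $\frac{1}{l_{\alpha_n}(X_0)}\log\frac{1}{|k_1|}-\frac{c(L_0)}{l_{\alpha_n}(X_0)}-1$, with $c(L_0)$ depending only on $L_0$. Each of these crossings projects to a point of $g_{comp}\cap\gamma_i^j$, so, up to the additive error $2$ built into the definition of $w_{\alpha_n}$, their number is at most $w_{\alpha_n}(g_{comp})+2$. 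Comparing the two bounds and multiplying through by $l_{\alpha_n}(X_0)$ gives $\log\frac{1}{|k_1|}\le l_{\alpha_n}(X_0)\,w_{\alpha_n}(g_{comp})+c'(L_0)$, i.e. $-k_1\ge e^{-c'(L_0)}e^{-l_{\alpha_n}(X_0)w_{\alpha_n}(g_{comp})}$, which is the claim with $C(L_0)=e^{-c'(L_0)}$.

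The hard part will be the bookkeeping in that last step: making rigorous the identification of the geometric count (number of $A$-translates of $\tilde\gamma$ crossed by $\tilde g$ within the lifted collar of $\alpha_n$) with the combinatorial winding number as it is actually defined — passing to the half-seams $\gamma_i^j$, taking the maximum over the components $g_s$ of $g\cap P_i$, and keeping track of the $\pm2$ slack — and, simultaneously, checking that the angle hypothesis indeed forces the sub-arc of $\tilde g$ terminating at $k_1$ (rather than the one terminating at $k_2$) to be the one that hugs the $y$-axis, so that it is $-k_1$ and not $k_2$ that gets controlled. (When $|k_1|$ is already bounded below by an $L_0$-constant there is nothing to prove, so one may assume $|k_1|$ small, hence $k_2$ large, which is what makes the bounded-geometry computation clean.) A final routine check is that every constant that appears ($h_0$, $\delta_0(L_0)$, $C_4(L_0)$, $c(L_0)$, $c'(L_0)$) depends only on $L_0$ and not on the individual cuff $\alpha_n$, which holds precisely because all the relevant cuff lengths are $\le L_0$.
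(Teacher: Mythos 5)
Your proposal is correct and follows essentially the same route as the paper: the first inequality via the intersection point $i\sqrt{-k_1k_2}$ of $\tilde g$ with the $y$-axis, and the second via counting, near the lifted cuff, how many fundamental annular shifts the leaf traverses before escaping to distance $d$ (the length of $\gamma_i^j$) from the axis, and comparing this count with $w_{\alpha_n}(g_{comp})$ up to an additive constant. The only (inessential) difference is bookkeeping: the paper applies powers of the deck transformation to $\tilde g$ and bounds $k_2$ from above, then invokes $-k_1\geq 1/k_2$, whereas you keep $\tilde g$ fixed, count its crossings with the translates $A^m\tilde\gamma$ on the circles $|z|=h_0e^{ml_{\alpha_n}(X_0)}$, and bound $-k_1$ from below directly; the identification of these crossings with intersections of a single component $g_{comp}$ with $\gamma_i^j$, which you flag as the remaining work, is asserted at the same level of detail in the paper itself.
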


\begin{proof}
Since $iy$ with $1\leq y\leq e^{l_{\alpha_n}(X_0)}\leq e^{L_0}$ belongs to the geodesic with endpoints $k_1<0$ and $k_2>0$, we have
$$
|iy-\frac{k_2+k_1}{2}|=\frac{k_2-k_1}{2}.
$$
This gives the first inequality in the above lemma.

Let $d>0$ be the length of the orthogonal to $\alpha_n$ and the side of the hexagon $\Sigma_i^j$ opposite $\alpha_n$, for fixed $i,j$. 
Let $\varphi>0$ be such that the distance $d$ between the positive $y$-axis and the Euclidean half-line through the origin which subtends the angle $\varphi$ with the $x$-axis satisfies
$$
\sin\varphi =\frac{1}{\cosh d}.
$$
Let $r>0$ be such that the geodesic with endpoints $-k_1e^{-r}$ and $k_2e^{-r}$ goes through the point $e^{i\varphi}$ of the unit circle centered at $0$. It follows that
$$\frac{r}{l_{\alpha_n}(X_0)}\leq w_{\alpha_n}(g_{comp})+2
$$
because $\frac{r}{l_{\alpha_n}(X_0)}$ is the number of translates of $\tilde{g}$ (under the covering transformation corresponding to $\alpha_n$) between $\tilde{g}$ and the geodesic with endpoints $-k_1e^{-r}$ and $k_2e^{-r}$, each translate intersects the lift of $\gamma_i^j$ (adjacent to the $y$-axis) exactly once, and these translates glued together form a single component covering $g_{comp}$ in $P_i$. 

Using Euclidean geometry, we have
$$
|e^{i\varphi}-\frac{k_1+k_2}{2}e^{-r}|^2=(\frac{k_2-k_1}{2})^2e^{-2r}
$$
which implies
$$
k_2\leq C'(L_0)e^r\leq C''(L_0)e^{l_{\alpha_n}(X_0)(
w_{\alpha_n}(g_{comp})+2)}.
$$
Then the first inequality in the theorem implies the second inequality.
\end{proof}

\vskip .2 cm

Let $\tilde{\mu}$ be the lift of $\mu$ to the universal covering $\mathbb{H}^2$
and let $E^{\tilde{\mu}}:\mathbb{H}^2\to\mathbb{H}^2$ be the corresponding earthquake. Let $O$ be the stratum of $\tilde{\mu}$ that contains $e^{l_{\alpha_n}(X_0)}i$ and normalize the earthquake such that $E^{\tilde{\mu}}|_O=id$. Let $O_1$ be the stratum  of $\tilde{\mu}$ which contains $i$ that is the image of $O$ under the covering map $B\in PSL_2(\mathbb{R})$ of the geodesic $\alpha_n$. Note that $B(z)=e^{-l_{\alpha_n}(X_0)}z$. Let $B^{\tilde{\mu}}$ be the covering map for $\alpha_n$ on the surface $E^{\mu}(X_0)=X^{\mu}$. Then by \cite{EpsteinMarden} we have
$$
B^{\tilde{\mu}}=E^{\tilde{\mu}}|_{O_1}\circ B.
$$

Since $E^{\tilde{\mu}}$ is a left earthquake, it follows that
$E^{\tilde{\mu}}|_{O_1}$ is a hyperbolic translation with the axis separating $O$ from $O_1$. Let $k_1<0$ and $k_2>0$ be the repelling and the attracting fixed points of $E^{\tilde{\mu}}|_{O_1}$ and let $m\geq 0$ be its translation length. Then we have
\begin{equation}
\label{eq:trace_under_earthq}
\begin{split}
trace(B^{\tilde{\mu}})=\frac{e^mk_2-k_1}{e^{m/2}(k_2-k_1)}e^{-\frac{l}{2}}+ \frac{k_2-e^mk_1}{e^{m/2}(k_2-k_1)}e^{\frac{l}{2}}\\ =2\cosh\frac{m-l}{2}-\frac{2k_1}{k_2-k_1}[\cosh\frac{m+l}{2}-cosh\frac{m-l}{2}]
\end{split}
\end{equation}
where for short $l=l_{\alpha_n}(X_0)$.
The above equation gives the inequality
\begin{equation}
\label{eq:trace_lower_estimate}
trace(B^{\tilde{\mu}})\geq 2-\frac{2k_1}{k_2-k_1}ml.
\end{equation}

\vskip .2 cm

We relate the translation length $m$ to the earthquake measure $\mu (\alpha_n)$ using the following lemma.

\begin{lemma}
\label{lem:tr-lengh-earthquake-measure}
Let $d>0$ be the length of a geodesic arc $I$ in $\mathbb{H}^2$ that transversely intersects a geodesic lamination $\mu$. Denote by $E^{\mu}$ the left earthquake of $\mathbb{H}^2$ which is normalized to be the identity on stratum $O$ of $\mu$ that contains an endpoint of $I$ and denote by $O_1$ another stratum of $\mu$ that contains the other endpoint of $I$. Then there exists $C(d,\mu (I))\geq 1$ such that
$$
\mu (I)\leq m\leq C(d,\mu (I))\mu (I),
$$
where $m$ is the translation length of $E^{\mu}|_{O_1}$.
\end{lemma}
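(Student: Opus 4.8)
The plan is to prove the two inequalities by different mechanisms: the lower bound $\mu (I)\le m$ from the algebraic structure of left earthquakes, and the upper bound $m\le C(d,\mu (I))\mu (I)$ from a Gronwall-type control of the deviation of $E^{\mu}(I)$ from $I$.

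It suffices to treat the case in which $\mu$ meets $I$ in finitely many atomic leaves $g_1,\dots ,g_k$, numbered in the order in which they cross $I$ going from the endpoint in $O$ to the endpoint in $O_1$; the general case follows by weak-$*$ approximation of the transverse measure by finitely supported ones of the same total mass $\mu (I)$ together with continuity of the comparison isometry in the earthquake measure (cf. \cite{Th1}, \cite{GHL}). Put $a_i=\mu (g_i)$, so $\sum_i a_i=\mu (I)$, and let $T_i$ be the hyperbolic translation along $g_i$ by $a_i$ in the left direction prescribed by $E^{\mu}$. By \cite{EpsteinMarden} the comparison isometry is $A:=E^{\mu}|_{O_1}=T_k\circ\cdots\circ T_1$, and each partial product $A_j:=T_j\circ\cdots\circ T_1$ is a hyperbolic translation (or the identity) whose axis separates $O$ from the stratum just past $g_j$; in particular that axis is disjoint from $g_{j+1}$ and is coherently oriented with $T_{j+1}$. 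Since $I$ is a geodesic of length $d$ crossing all the $g_i$, the relevant axes are pairwise within perpendicular distance $\le d$.

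For the lower bound I would use the elementary trace identity for two coherently oriented hyperbolic translations $T,T'$ of lengths $a,b$ whose axes are at distance $\ell$,
$$
\mathrm{tr}(T'T)=2\cosh\tfrac a2\cosh\tfrac b2+2\cosh\ell\,\sinh\tfrac a2\sinh\tfrac b2\ \ge\ 2\cosh\tfrac{a+b}{2},
$$
(the inequality using only $\cosh\ell\ge 1$), and induct on $A=T_kA_{k-1}$ to obtain $\mathrm{tr}(A)\ge 2\cosh\big(\tfrac12\sum_i a_i\big)$, whence $m=2\,\mathrm{arccosh}(\mathrm{tr}(A)/2)\ge\sum_i a_i=\mu (I)$. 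For the upper bound I would parametrize $I$ by arclength on $[0,d]$ with $I(0)\in O$ and set $\delta (s)=\dist (I(s),E^{\mu}(I(s)))$, so $\delta$ vanishes before the first leaf and $\delta (d)=\dist (q,Aq)\ge m$. On a gap between consecutive leaves $E^{\mu}$ restricts to a fixed hyperbolic translation $A_j$, and the displacement formula $\dist (x,A_jx)=2\,\mathrm{arcsinh}\big(\cosh\rho\,\sinh(m(A_j)/2)\big)$ with $\rho=\dist (x,\mathrm{axis}(A_j))$, together with $\cosh(\rho+t)\le e^{t}\cosh\rho$, gives $\sinh(\delta (s')/2)\le e^{s'-s}\sinh(\delta (s)/2)$ across the gap; crossing the leaf $g_{j+1}$ replaces $E^{\mu}$ by $T_{g_{j+1}}\circ A_j$, and since $I\cap g_{j+1}$ lies on the axis of $T_{g_{j+1}}$ one gets $\delta (s_{j+1}^{+})\le\delta (s_{j+1}^{-})+2\,\mathrm{arcsinh}\big(\cosh(\delta (s_{j+1}^{-}))\sinh(a_{j+1}/2)\big)$. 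Iterating these two estimates, and using $\sum_i a_i=\mu (I)$ and $\sum_i(s_{i+1}-s_i)\le d$, bounds $\sinh(\delta (d)/2)$, hence (as $\mathrm{arcsinh}$ is sublinear) $\delta (d)$ and therefore $m$, by $C(d,\mu (I))\,\mu (I)$ with $C$ depending only on $d$ and $\mu (I)$.

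I expect the upper bound to be the genuine obstacle. Estimating $m$ by multiplying the per-leaf trace bounds is not enough: each composition introduces an additive error of order $d$, so the bound degenerates when $\mu$ is spread over many leaves. One must instead track the deviation $\delta (s)$ itself and close a Gronwall loop whose constant depends only on $d$ and $\mu (I)$, and in particular is controlled as the number of leaves increases rather than growing with it; one must also verify, throughout, that the axis of each partial comparison map $A_j$ is disjoint from $g_{j+1}$ and coherently oriented with $T_{j+1}$, which is exactly where the hypothesis that $E^{\mu}$ is a left earthquake is used.
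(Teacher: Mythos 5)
Your overall strategy is sound and your lower bound is essentially the paper's: the paper quotes (from Thurston and Gardiner--Hu--Lakic) the composition inequality $\tau(S)+\tau(T)\le\tau(S\circ T)\le\tau(S)+\tau(T)+C'(d)\min\{\tau(S),\tau(T)\}d^2$ for coherently oriented hyperbolic translations whose disjoint axes meet an arc of length $d$, and iterates it over the leaves; your trace identity plus induction is just a direct proof of the left half. But note that your stated reason for abandoning this route for the upper bound is mistaken: because of the factor $\min\{\tau(S),\tau(T)\}$, composing the partial product $A_j$ with $T_{j+1}$ costs only $C'(d)\,d^2 a_{j+1}$, not an error ``of order $d$'', so summing gives $m\le\big(1+C'(d)d^2\big)\mu(I)$ with no dependence on the number of leaves. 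That is exactly the paper's (very short) proof; the only hidden work, which you correctly identify, is that the axis of each partial comparison map separates the strata, hence is disjoint from and coherently oriented with the next leaf.

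Your alternative upper bound via $\delta(s)=\dist\big(I(s),E^{\mu}(I(s))\big)$ is a legitimately different mechanism, but as written it does not close. The jump estimate $\delta(s^+)\le\delta(s^-)+2\,\mathrm{arcsinh}\big(\cosh(\delta(s^-))\sinh(a_{j+1}/2)\big)$ has increment $\approx a_{j+1}\cosh\delta$ for small atoms, so under refinement (many leaves, total mass $\mu(I)$ fixed -- which is exactly the regime your approximation step must survive) the recursion is governed by the ODE $\delta'=\cosh\delta$, which blows up once the accumulated mass exceeds $\pi/2$; hence for $\mu(I)\ge\pi/2$ your iteration yields no bound $C(d,\mu(I))$ uniform in the leaf count, and the gap factors only make this worse. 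The repair is already in your hands: apply the triangle inequality in the other order at the crossing point. With the correct left-earthquake bookkeeping the comparison map past $g_{j+1}$ is $A_j\circ T_{j+1}$ (translations along the domain leaves composed in the order the leaves are met, so $E^{\mu}|_{O_1}=T_1\circ\cdots\circ T_k$, not $T_k\circ\cdots\circ T_1$), and since $x=I(s_{j+1})$ lies on $g_{j+1}$ one gets exactly $\delta(s^+)\le\delta(s^-)+a_{j+1}$. With this additive jump and your gap estimate, setting $v=\sinh(\delta/2)$ gives $v\mapsto e^{t}v$ on gaps and $v\mapsto ve^{a/2}+\sinh(a/2)$ at jumps, whence $\sinh(\delta(d)/2)\le e^{d+\mu(I)/2}\sum_i\sinh(a_i/2)\le \tfrac12 e^{d+\mu(I)}\mu(I)$ and so $m\le\delta(d)\le e^{d+\mu(I)}\mu(I)$, which is of the required form $C(d,\mu(I))\,\mu(I)$ (with a worse constant than the paper's $1+C'(d)d^2$, but within the lemma's allowance). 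With that one correction your argument is a valid, more self-contained alternative to the paper's citation-based proof.
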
 

\begin{proof}
Let $S$ and $T$ be two hyperbolic translations whose axes are disjoint and both intersect a closed arc of length $d$. Denote by $\tau (S)$ the translation length of $S$. Then (cf. \cite{Th1}, \cite{GHL})
$$
\tau (S)+\tau (T)\leq\tau (S\circ T)\leq\tau (S)+\tau (T)+C'(d)\min\{\tau (S),\tau (T)\} d^2
$$
for some constant $C'(d)>0$.

Assume that only finitely many leaves $\{ g_1,\ldots ,g_n\}$ of $\mu$ intersect the geodesic arc $I$. Let $T_1,\ldots ,T_n$ be the hyperbolic translations whose axes are $g_1,\ldots ,g_n$ and whose translation lengths are $\mu (g_1),\ldots ,\mu (g_n)$. The above inequality gives
$$
\sum_{i=1}^n\tau (T_i)\leq \tau (T_1\circ\cdots\circ T_n)\leq \sum_{i=1}^n\tau (T_i) +C'(d)\sum_{i=1}^n\tau (T_i)d^2.
$$
Since $\tau (T_i)=\mu (g_i)$, $\sum_{i=1}^n\tau (g_i)=\mu (I)$ and $m=\tau (T_1\circ\cdots\circ T_n)$ the lemma is proved in this case. For the general $\mu$, note that $E^{\mu}$ is approximated by finite earthquakes and the earthquake measure $\mu$ is approximated by the measure of finite earthquakes. The lemma follows by continuity.
\end{proof}

Assume that there exists $C_0>1$ such that $$\mu (\alpha_n)\geq C_0l,$$ where for short $l=l_{\alpha_n}(X_0)$. By Lemma \ref{lem:tr-lengh-earthquake-measure} we have that
$$
m\geq C_0l.
$$ 
Then the equation (\ref{eq:trace_under_earthq}) 
gives
\begin{equation*}
trace(B^{\tilde{\mu}})\geq 2+\Big{(}\frac{C_0-1}{2}\Big{)}^2l^2
\end{equation*}
which implies that 
\begin{equation}
\label{eq:lower_est_lower_bound_on_measure}
l_{\alpha_n}(X^{\mu})\geq C'(L_0)l_{\alpha_n}(X_0)
\end{equation}
for some $C'(L_0)>0$.

\vskip .2 cm

If there exists $C_0'>C(L_0,\|\mu\|_{ls})\geq 1$ for the constant $C(L_0,\|\mu\|_{ls})$ from Lemma \ref{lem:tr-lengh-earthquake-measure} such that
$$
\mu (\alpha_n)\leq \frac{1}{C_0'}l_{\alpha_n}(X_0)
$$
then (by Lemma \ref{lem:tr-lengh-earthquake-measure} again) we have
$$m\leq C_1(L_0,\|\mu\|_{ls})l_{\alpha_n}(X_0)$$
where $C_1(L_0,\|\mu\|_{ls})=\frac{C(L_0,\|\mu\|_{ls})}{C_0'}<1$.
Then the equation (\ref{eq:trace_under_earthq}) 
gives
\begin{equation*}
trace(B^{\tilde{\mu}})\geq 2+\Big{(}\frac{1-1/C_1}{2}\Big{)}^2l^2
\end{equation*}
which implies that 
\begin{equation}
\label{eq:lower_est_upper_bound_on_measure}
l_{\alpha_n}(X^{\mu})\geq C''(L_0,\|\mu\|_{ls})l_{\alpha_n}(X_0)
\end{equation}
for some $C''(L_0,\|\mu\|_{ls})>0$.

\vskip .2 cm

Assume that
\begin{equation}
\label{eq:squeezed_measure}
\frac{1}{C_0'}l_{\alpha_n}(X_0)<
\mu (\alpha_n)<C_0l_{\alpha_n}(X_0)
\end{equation}
which implies 
$$
\frac{1}{C_0'}l_{\alpha_n}(X_0)<
m<C_0C(L_0,\|\mu\|_{ls})l_{\alpha_n}(X_0).
$$

Then the inequality (\ref{eq:trace_lower_estimate}) gives
\begin{equation}
\label{eq:lower_estimate_m_close_to_l}
trace(B^{\tilde{\mu}})\geq 2-\frac{1}{C_2(L_0,\|\mu\|_{ls})}\frac{2k_1}{k_2-k_1}l^2
\end{equation}
which implies that $\frac{l_{\alpha_n}(X_0)}{l_{\alpha_n}(X^{\mu})}\leq C'''$ when the angle between $\alpha_n$ and $g_{comp}$ is less than $\frac{\pi}{2}$.
If the angle between $\alpha_n$ and $g_{comp}$ is greater than $\frac{\pi}{2}$,
then (\ref{eq:lower_estimate_m_close_to_l}) together with Lemma \ref{lem:twisting_into_radius} implies
\begin{equation}
\label{eq:lenght_m_close_to_l}
l_{\alpha_n}(X^{\mu})\geq C_2(L_0,\|\mu\|_{ls})e^{-l_{\alpha_n}(X_0)w_{\alpha_n} (g_{comp})} l_{\alpha_n}(X_0)
\end{equation}
where $g$ is a leaf of $\mu$ which intersects $\alpha_n$ and $g_{comp}$ is a component 
of $g\cap P_i$ for either $i=1$ or $i=2$. To estimate 
$\frac{l_{\alpha_n}(X_0)}{l_{\alpha_n}(X^{\mu})}$ we need to estimate the right hand side of (\ref{eq:lenght_m_close_to_l}).

\begin{lemma}
\label{lem:bound_on_twists}
Let $\alpha_n$ be a simple closed geodesic on $X_0$ from the fixed geodesic pants decomposition $\mathcal{P}$ and let $P_1,P_2$ be the two (possibly equal) pairs of pants in the decomposition $\mathcal{P}$ with a common cuff $\alpha_n$.
Let $w_{\alpha_n}(g_{comp})$ be a twisting number around $\alpha_n$ of a component $g_{comp}$ of $g\cap P_i$ for a leaf $g$ of the measured lamination $\mu$. 
Then
$$
w_{\alpha_n}(g_{comp})\leq C \frac{\max\{ 1,|\log l_{\alpha_n}(X_0)|\}}{\mu (\alpha_n)},
$$
where $C=C(\|\mu\|_{ls})>0$ depends on the length spectrum norm $\|\mu\|_{ls}$ of $\mu$.
\end{lemma}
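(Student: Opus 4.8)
The plan is to compare $w_{\alpha_n}(g_{comp})$ against the length‑spectrum norm by producing an auxiliary simple closed geodesic that ``sees'' the winding and yet is short. Write $w:=w_{\alpha_n}(g_{comp})$ and $l:=l_{\alpha_n}(X_0)$; the inequality to prove is equivalent to $w\,\mu(\alpha_n)\le C(\|\mu\|_{ls})\max\{1,|\log l|\}$. First I would dispose of the trivial regime: if $w$ is bounded by an absolute constant $w_0$, then since $\mu(\alpha_n)\le\|\mu\|_{ls}\,l\le\|\mu\|_{ls}L_0$ and $\max\{1,|\log l|\}\ge 1$, we get $w\le w_0\le w_0\|\mu\|_{ls}L_0\cdot\frac{\max\{1,|\log l|\}}{\mu(\alpha_n)}$, so from now on assume $w$ is large. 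We may also assume the relevant component $g_{comp}$ spirals in $P_1$ (say).

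\textbf{Step 1 (a short curve crossing $\alpha_n$).} Using the orthogeodesic arc(s) $\gamma_i$ of $P_i$ (and the collar of $\alpha_n$, whose half‑width is $\operatorname{arcsinh}(1/\sinh(l/2))\le C(L_0)\max\{1,|\log l|\}$), I would exhibit a simple closed geodesic $\delta_n$ on $X_0$ that meets $\alpha_n$ transversally in at most two points, crosses $\alpha_n$ at angle bounded away from $0$ and $\pi$ (so that $\delta_n$ itself winds at most $O(1)$ times around $\alpha_n$), and satisfies
$$
l_{\delta_n}(X_0)\ \le\ C(L_0)\max\{1,|\log l|\}.
$$
Concretely one closes up $\gamma_1\subset P_1$ with a corresponding arc in $P_2$ (or uses the dual curve when $\alpha_n$ is non‑separating) and picks the length‑minimizing twist; the length estimate is a pants/collar trigonometry computation. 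I expect this length bound to be one of the two delicate points, since one must check it holds uniformly in the bounded pants decomposition.

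\textbf{Step 2 (propagating the winding to all leaves through disjointness).} The geometric heart is: because leaves of the geodesic lamination $\mu$ are pairwise disjoint and $g_{comp}$ spirals $w$ times around $\alpha_n$ inside $P_1$, \emph{every} leaf $h$ of $\mu$ that crosses $\alpha_n$ must, on the $P_1$‑side, spiral at least $w-C$ times around $\alpha_n$: it cannot ``cut across'' the $w$ strands of $g_{comp}$, nor spiral in the opposite sense, without intersecting $g$. (One runs the argument in the universal cover of the collar: $g_{comp}$ lifts to a family of $\approx w$ consecutive translates of a geodesic under the hyperbolic translation fixing $\widetilde\alpha_n$, and a lift of $h$ that does not cross these must ``track'' them.) Consequently $w_{\alpha_n}(h_{comp})\ge w-C$ for a component $h_{comp}$ of $h\cap P_1$ at every crossing of $h$ with $\alpha_n$. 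Since $\delta_n$ crosses $\alpha_n$ near‑perpendicularly, its radial sub‑arc through the $P_1$‑collar meets the spiralling sub‑arc of $h$ at least $w-C$ times \emph{per crossing of $h$ with $\alpha_n$}; integrating over the transverse measure,
$$
\mu(\delta_n)=i(\delta_n,\mu)\ \ge\ c\,(w-C)\,i(\alpha_n,\mu)\ =\ c\,(w-C)\,\mu(\alpha_n)
$$
for an absolute constant $c>0$ (in the atomic case this is a finite sum; in general a continuity/approximation step).

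\textbf{Step 3 (conclusion).} Combining Steps 1–2 with the length‑spectrum bound $\mu(\delta_n)\le\|\mu\|_{ls}\,l_{\delta_n}(X_0)$ gives
$$
c\,(w-C)\,\mu(\alpha_n)\ \le\ \|\mu\|_{ls}\,C(L_0)\max\{1,|\log l|\},
$$
and, absorbing the additive constant $C$ via $\mu(\alpha_n)\le\|\mu\|_{ls}L_0$ together with $\max\{1,|\log l|\}\ge 1$, we obtain $w\le C(\|\mu\|_{ls})\,\frac{\max\{1,|\log l|\}}{\mu(\alpha_n)}$, as claimed. The main obstacles are Step 1 (the uniform length estimate on $\delta_n$ coming from the bounded pants decomposition) and Step 2 (turning disjointness of leaves into the quantitative lower bound $w_{\alpha_n}(h_{comp})\ge w-C$ for all leaves crossing $\alpha_n$, and hence $\mu(\delta_n)\gtrsim w\,\mu(\alpha_n)$); everything else is bookkeeping.
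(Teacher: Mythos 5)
Your proposal is correct and follows essentially the same route as the paper: the paper also uses the dual short curve $\beta_n$ (with $l_{\beta_n}(X_0)\leq C\max\{1,|\log l_{\alpha_n}(X_0)|\}$), notes that by disjointness all components of leaves crossing $\alpha_n$ have winding numbers within an additive constant of each other, deduces $\mu(\alpha_n)\,w_{\alpha_n}(g_{comp})\lesssim \mu(\beta_n)\leq\|\mu\|_{ls}\,l_{\beta_n}(X_0)$, and concludes. Your Steps 1--3 are the same argument, just spelled out with the trivial small-$w$ regime and the additive constants handled explicitly.
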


\begin{proof}
We consider the leaves of $\mu$ that intersect $\alpha_n$. For each such leaf $g$, we divide it into components of $g\cap P_i$. Observe that $w_{\alpha_n}(g_{comp})$ and $w_{\alpha_n}(g'_{comp})$ for any two components $g_{comp}$ and $g'_{comp}$ differ by at most an additive constant which  can be taken to be $2$. Then it follows that the number of intersections between each component and the arc $\gamma_n^i$ is up to an additive constant equal to $2w_{\alpha_n}(g_{comp})$ for any component $g_{comp}$. Then we have
$$
\mu (\alpha_n) w_{\alpha_n}(g_{comp})\leq \mu (\beta_n )\leq \|\mu\|_{ls} l_{\beta_n}(X_0)
$$ 
and since
$$
l_{\beta_n}(X_0)\leq C'\max\{ 1,l_{\alpha_n}(X_0)\} 
$$
we obtain the desired conclusion.
\end{proof}

Thus if $\mu$ satisfies (\ref{eq:squeezed_measure}) and if the angle between $\alpha_n$ and $g_{comp}$ is greater than $\frac{\pi}{2}$, Lemma \ref{lem:bound_on_twists}
gives
$$
w_{\alpha_n}(g_{comp})\leq C(L_0,\|\mu\|_{ls})\frac{|\log l_{\alpha_n}(X_0)|}{l_{\alpha_n}(X_0)}.
$$

If the angle between $\alpha_n$ and $g_{comp}$ is greater than $\frac{\pi}{2}$ and if
there exists $C>0$ such that
$$
w_{\alpha_n}(g_{comp})\leq C\frac{1}{l_{\alpha_n}(X_0)}
$$
then 
$$
\frac{l_{\alpha_n}(X_0)}{l_{\alpha_n}(X^{\mu})}\leq C'
$$
for some $C'$.

\vskip .2 cm

To summarize, we have 

\begin{theorem}
\label{thm:bound-on-cuffs}
Let $X_0$ be a complete hyperbolic surface which has a geodesic pants decomposition $\mathcal{P} =\{\alpha_n\}_{n\in\mathbb{N}}$ such that
$$
l_{\alpha_n}(X_0)\leq L_0
$$
for some fixed $L_0>0$. Let $\mu$ be a measured (geodesic) lamination on $X_0$ such that
$$
\|\mu\|_{ls}<\infty .
$$
If there exist $C_0>1$, $C_0'>C(L_0,\|\mu\|_{ls})\geq 1$ for the constant $C(L_0,\|\mu\|_{ls})$ from Lemma \ref{lem:tr-lengh-earthquake-measure} and $C_1>0$ such that (for each cuff $\alpha_n$ of $\mathcal{P}$) $\mu$ satisfies one of the following:
\begin{enumerate}
\item $\mu (\alpha_n )>C_0l_{\alpha_n}(X_0)$
\item $\mu (\alpha_n )<\frac{1}{C_0'}l_{\alpha_n}(X_0)$
\item  the angle between $\alpha_n$ and a leaf $g$ of $\mu$ is less than or equal to $\frac{\pi}{2}$
\item $\frac{1}{C_0'}l_{\alpha_n}(X_0)\leq \mu (\alpha_n )\leq C_0l_{\alpha_n}(X_0)$, the angle between $\alpha_n$ and a leaf $g$ of $\mu$ is greater than $\frac{\pi}{2}$ and $w_{\alpha_n}(g_{comp})<C_1\frac{1}{l_{\alpha_n}(X_0)}$
\end{enumerate}
then there exists $C^{*}=C^{*}(L_0,\|\mu\|_{ls},C_1)>0$ such that
$$
\frac{l_{\alpha_n}(X_0)}{l_{\alpha_n}(E^{\mu}(X_0))}\leq C^{*}
$$
for all $\alpha_n\in\mathcal{P}$.
\end{theorem}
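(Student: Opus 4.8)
The assertion compiles the estimates already obtained in this section, so the plan is to run through the four alternatives and produce, in each, a lower bound $l_{\alpha_n}(E^{\mu}(X_0))\geq c\, l_{\alpha_n}(X_0)$ with $c>0$ depending only on $L_0$, $\|\mu\|_{ls}$ and the fixed constants $C_0,C_0',C_1$ of the hypothesis; then $C^{*}$ is the reciprocal of the smallest of these finitely many constants (together with the trivial bound $l_{\alpha_n}(X_0)/l_{\alpha_n}(X^{\mu})\leq L_0$). Fix a cuff $\alpha_n$, write $l=l_{\alpha_n}(X_0)\leq L_0$ and $l'=l_{\alpha_n}(X^{\mu})$, and recall $\mathrm{trace}(B^{\tilde\mu})=2\cosh\frac{l'}{2}$ for the holonomy $B^{\tilde\mu}$ of $\alpha_n$ on $X^{\mu}$. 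As a preliminary reduction, if $l'\geq 1$ then $l/l'\leq L_0$ already, so it suffices to treat $l'<1$, in which range $\mathrm{trace}(B^{\tilde\mu})-2\asymp (l')^2$, and any bound $\mathrm{trace}(B^{\tilde\mu})\geq 2+\kappa l^2$ converts into $l'\gtrsim\sqrt{\kappa}\,l$.

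For alternatives (1) and (2) I would apply Lemma \ref{lem:tr-lengh-earthquake-measure} (to the $\alpha_n$-segment of the axis, of length $l$, across which $\tilde\mu$ has total mass $\mu(\alpha_n)\leq\|\mu\|_{ls}l$) to relate the translation length $m$ of $E^{\tilde\mu}|_{O_1}$ to $\mu(\alpha_n)$. The second summand of the trace formula (\ref{eq:trace_under_earthq}) is nonnegative, since $-2k_1/(k_2-k_1)>0$ (because $k_1<0<k_2$) and $\cosh\frac{m+l}{2}\geq\cosh\frac{m-l}{2}$; hence $\mathrm{trace}(B^{\tilde\mu})\geq 2\cosh\frac{m-l}{2}$, i.e.\ $l'\geq|m-l|$. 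In case (1), $m\geq\mu(\alpha_n)>C_0l$ gives $l'\geq(C_0-1)l$, which is (\ref{eq:lower_est_lower_bound_on_measure}); in case (2), the choice $C_0'>C(L_0,\|\mu\|_{ls})$ forces $m\leq C(L_0,\|\mu\|_{ls})\,\mu(\alpha_n)<\frac{C(L_0,\|\mu\|_{ls})}{C_0'}l<l$, so $l'\geq\big(1-\frac{C(L_0,\|\mu\|_{ls})}{C_0'}\big)l$, which is (\ref{eq:lower_est_upper_bound_on_measure}).

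For alternatives (3) and (4) I may assume neither (1) nor (2) holds (otherwise the previous paragraph applies), i.e.\ $\frac{1}{C_0'}l\leq\mu(\alpha_n)\leq C_0l$, so $\frac{1}{C_0'}l<m<C_0C(L_0,\|\mu\|_{ls})l$; now $|m-l|$ may vanish, so instead I feed $m>\frac{1}{C_0'}l$ into the refined inequality (\ref{eq:trace_lower_estimate}) to obtain $\mathrm{trace}(B^{\tilde\mu})\geq 2-\frac{1}{C_2(L_0,\|\mu\|_{ls})}\frac{2k_1}{k_2-k_1}l^2$ as in (\ref{eq:lower_estimate_m_close_to_l}) (the last term being positive since $k_1<0$), which reduces the problem to bounding $\frac{-k_1}{k_2-k_1}$ away from $0$, where $k_1<0<k_2$ are the endpoints of the axis of $E^{\tilde\mu}|_{O_1}$. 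Since that axis crosses the $y$-axis between $i$ and $e^{l}i$, Lemma \ref{lem:twisting_into_radius} gives $-k_1k_2\leq e^{2L_0}$, so it is enough to bound $-k_1$ below. In case (3) the hypothesis that $\alpha_n$ and the relevant leaf meet at angle $\leq\frac{\pi}{2}$ provides exactly such a bound, yielding $l/l'\leq C'''$; in case (4) the angle is obtuse, and I would instead invoke the second assertion of Lemma \ref{lem:twisting_into_radius}, namely $-k_1\geq C(L_0)e^{-l\,w_{\alpha_n}(g_{comp})}$, together with the hypothesis $w_{\alpha_n}(g_{comp})<C_1/l$, to get $-k_1\geq C(L_0)e^{-C_1}$; equivalently (\ref{eq:lenght_m_close_to_l}) then reads $l'\geq C_2(L_0,\|\mu\|_{ls})e^{-C_1}l$. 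Combining the four lower bounds gives the claim for every $\alpha_n\in\mathcal{P}$.

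The step I expect to be the crux is alternative (4): without the winding hypothesis, a leaf of $\mu$ meeting $\alpha_n$ obtusely may spiral around $\alpha_n$ arbitrarily many times, pushing $k_1\to 0$, $\mathrm{trace}(B^{\tilde\mu})\to 2$ and $l'/l\to 0$, so that $l/l'$ is unbounded; Lemma \ref{lem:twisting_into_radius} is precisely the tool that quantifies this degeneration, and the threshold $w_{\alpha_n}(g_{comp})<C_1/l$ is exactly what keeps the factor $e^{-l\,w_{\alpha_n}(g_{comp})}$ bounded below. The residual bookkeeping --- checking that all constants depend only on the advertised data, and that for every $\alpha_n$ at least one of the four bounds is available --- is routine.
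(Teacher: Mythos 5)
Your proposal is correct and follows essentially the same route as the paper: the trace formula (\ref{eq:trace_under_earthq}) combined with Lemma \ref{lem:tr-lengh-earthquake-measure} handles alternatives (1) and (2) via $|m-l|\gtrsim l$, while alternatives (3) and (4) are reduced to bounding $\frac{-k_1}{k_2-k_1}$ from below, with Lemma \ref{lem:twisting_into_radius} and the winding hypothesis supplying that bound in the obtuse case. Your identification of (4) as the crux and the bookkeeping converting trace bounds into the ratio bound match the paper's argument, so nothing further is needed.
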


\section{Upper bounds on lengths}
\label{sec:upper}

We shortly describe the upper bounds on the lengths of simple closed geodesics under the earthquake map. Namely, if the support of earthquake $E^{\mu}$ which intersects $\alpha_n$ consists of finitely many closed geodesics then it is standard that
$$
l_{\alpha_n}(X^{\mu})\leq l_{\alpha_n}(X_0)+\mu (\alpha_n).
$$
Indeed, the proof is by lifting the earthquake to the universal covering and noting that the shear is always to the left (cf. \cite{Ker}). Since the cocycle map for $E^{\mu}$ is obtained by approximations with finitely many leaves whose total measure is $\mu (\alpha_n)$ (cf. \cite{EpsteinMarden}), we obtain the above inequality for arbitrary earthquakes.

Since $\mu (\alpha_n)\leq \|\mu\|_{ls}l_{\alpha_n}(X_0)$, the above inequality implies
$$
\log\frac{l_{\alpha_n}(X^{\mu})}{l_{\alpha_n}(X_0)}\leq\log (1+\frac{\mu (\alpha_n)}{l_{\alpha_n}(X_0)})\leq \frac{\mu (\alpha_n)}{l_{\alpha_n}(X_0)}\leq\|\mu\|_{ls}.
$$

\section{Bounding the twists of the cuffs}

In this section we bound the twists $|t_{\alpha_n}(E^{\mu}(X_0))-t_{\alpha_n}(X_0)|$. We recall that $t_{\alpha_n}(X_0)$ is chosen such that $0\leq t_{\alpha_n}(X_0)<l_{\alpha_n}(X_0)$. By the proof in \cite[Theorem 2.1, Step I, Case 1]{Sa3}, we have that (cf. Figure 2 in \cite{Sa3}) 
$$
|t_{\alpha_n}(X^{\mu})|\leq \max\{1,|\log l_{\alpha_n}(X^{\mu})|\}+l_{\beta_n}(X^{\mu}).
$$
First of all $l_{\alpha_n}(X^{\mu})$ is proportional to $l_{\alpha_n}(X_0)$ with the universal constants depending on $\|\mu\|_{ls}$ by the previous two sections. Moreover, we have that
\begin{equation}
\label{eq:bound_on_tw}
l_{\beta_n}(X^{\mu})\leq l_{\beta_n}(X_0)+\mu (\beta_n ) .
\end{equation}
Since $l_{\beta_n}(X_0)\leq C\max\{ 1,|\log l_{\alpha_n}(X_0)|\}$ and
$\mu (\beta_n)\leq\|\mu\|_{ls} l_{\beta_n}(X_0)$, we obtain
$$
l_{\beta_n}(X^{\mu})\leq C(\|\mu\|_{ls})\max\{ 1,|\log l_{\alpha_n}(X_0)|\}
$$
which proves the desired bound on $|t_{\alpha_n}(E^{\mu}(X_0))-t_{\alpha_n}(X_0)|$. 

To see that (\ref{eq:bound_on_tw}) holds, it is enough to note that it holds when the support of $\mu$ intersects $\beta_n$ in finitely many leaves and the general case follows by approximations of earthquakes cocycles with cocycles supported on finitely many leaves (cf. \cite{EpsteinMarden}).

\section{Necessity of the condition $\|\mu\|_{ls}<\infty$}

We show that $$\|\mu\|_{ls}<\infty$$ is necessary for $E^{\mu}(X_0)=X^{\mu}$ to satisfy
$$d_{ls}(X_0,X^{\mu})<\infty.$$
Assume on the contrary that there exists a sequence $\beta_{n}$ of the simple closed geodesics on $X_0$ such that $$\frac{\mu (\beta_n)}{l_{\beta_n}(X_0)}\to\infty$$ as $n\to\infty$. Then after normalizing the earthquake $E^{\tilde{\mu}}:\mathbb{H}^2\to\mathbb{H}^2$ as in \S \ref{sec:lower-bound}, we get by (\ref{eq:trace_under_earthq}) that
$$
trace(B^{\mu})\geq 2\cosh \frac{\mu ( \beta_n)-l_{\beta_n}(X_0)}{2}
$$
which implies 
$$
l_{\beta_n}(X^{\mu})\geq C(\mu (\beta_n)-l_{\beta_n}(X_0)).
$$
In conclusion
$$
\frac{l_{\beta_n}(X^{\mu})}{l_{\beta_n}(X_0)}\geq C\frac{\mu (\beta_n)}{l_{\beta_n}(X_0)}-C\to\infty
$$
as $n\to\infty$ which contradicts $d_{ls}(X_0,X^{\mu})<\infty$. Thus $\|\mu\|_{ls}<\infty$ is a necessary condition.

\section{Conclusions}
\label{sec:concl}

We established that the condition $\|\mu\|_{ls}<\infty$ is necessary for $E^{\mu}(X_0)=X^{\mu}$ to satisfy $d_{ls}(X_0,X^{\mu})<\infty$. From Theorem \ref{thm:bound-on-cuffs},  we immediately conclude that the following earthquake paths are inside $T_{ls}(X_0)$.

\begin{theorem}
\label{thm:earthquake-paths}
Let $X_0$ be a complete hyperbolic surface with an upper bounded geodesic pants decomposition $\mathcal{P}=\{\alpha_n\}$ and let $\mu$ be a measured geodesic lamination on $X_0$ with
$$
\|\mu\|_{ls}<\infty.
$$
Then $E^{t\mu}(X_0)=X^{t\mu}\in T_{ls}(X_0)$ for all $t\geq 0$ if there exists $C>0$ such that for each $\alpha_n$ one of the following holds:
\begin{enumerate}
\item the angle between $\alpha_n$ and a leaf of $\mu$ is less than $\frac{\pi}{2}$,
\item the angle between $\alpha_n$ and a leaf of $\mu$ is greater than $\frac{\pi}{2}$, and $w_{\alpha_n}(g_{comp})\leq C\frac{1}{l_{\alpha_n}(X_0)}$.
 \end{enumerate}
\end{theorem}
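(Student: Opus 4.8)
The plan is to derive this from Theorem~\ref{thm:bound-on-cuffs} applied to the rescaled lamination $t\mu$ for each fixed $t\geq 0$, combined with the length bound of \S\ref{sec:upper} and the twist bound established above, and then to invoke the Fenchel--Nielsen characterization of $T_{ls}(X_0)$ recalled in \S\ref{sec:2}. The point that makes the reduction work is a \emph{scale invariance}: rescaling $\mu$ by $t$ changes neither the support of the lamination nor, therefore, the angle that a cuff $\alpha_n$ makes with the leaves through it, nor the winding numbers $w_{\alpha_n}(g_{comp})$; only the transverse measure scales, with $(t\mu)(\alpha_n)=t\,\mu(\alpha_n)$ and $\|t\mu\|_{ls}=t\|\mu\|_{ls}<\infty$. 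Before starting I would record that $\|\mu\|_{ls}<\infty$ together with the existence of an upper bounded pants decomposition guarantees that $E^{t\mu}$ is, for every $t\geq 0$, a genuine earthquake carrying $X_0$ onto a complete hyperbolic surface $X^{t\mu}$, so that the quantities $l_{\alpha_n}(X^{t\mu})$ and $t_{\alpha_n}(X^{t\mu})$ are meaningful; this is where one uses that length spectrum boundedness forces Thurston boundedness under the standing assumption on $X_0$, after which $\|t\mu\|_{Th}=t\|\mu\|_{Th}<\infty$ and the earthquake path is defined for all $t\geq 0$.

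Fix $t\geq 0$ and choose, once and for all, constants $C_0>1$ and $C_0'>C(L_0,t\|\mu\|_{ls})$, the latter being the constant of Lemma~\ref{lem:tr-lengh-earthquake-measure}, and put $C_1:=2C$. I then check, cuff by cuff, that $t\mu$ satisfies one of the four alternatives of Theorem~\ref{thm:bound-on-cuffs}. If $\alpha_n$ falls under hypothesis~(1) of the present theorem, then the angle between $\alpha_n$ and a leaf of $t\mu$ is less than $\frac{\pi}{2}$, so alternative~(3) of Theorem~\ref{thm:bound-on-cuffs} holds. If $\alpha_n$ falls under hypothesis~(2), then the angle exceeds $\frac{\pi}{2}$ and $w_{\alpha_n}(g_{comp})\leq C\,l_{\alpha_n}(X_0)^{-1}<C_1\,l_{\alpha_n}(X_0)^{-1}$; now I split according to the size of $t\mu(\alpha_n)$: if $t\mu(\alpha_n)>C_0 l_{\alpha_n}(X_0)$ then alternative~(1) applies, if $t\mu(\alpha_n)<\tfrac{1}{C_0'}l_{\alpha_n}(X_0)$ then alternative~(2) applies, and if $\tfrac{1}{C_0'}l_{\alpha_n}(X_0)\leq t\mu(\alpha_n)\leq C_0 l_{\alpha_n}(X_0)$ then alternative~(4) applies with the chosen $C_1$. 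This case split is exhaustive, so Theorem~\ref{thm:bound-on-cuffs} applies to the lamination $t\mu$ and produces a constant $C^{*}=C^{*}(L_0,t\|\mu\|_{ls},C)>0$ with $l_{\alpha_n}(X_0)/l_{\alpha_n}(X^{t\mu})\leq C^{*}$ for every $n$.

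It remains to feed this into the Fenchel--Nielsen criterion. Combining the lower bound just obtained with the upper bound $\log\big(l_{\alpha_n}(X^{t\mu})/l_{\alpha_n}(X_0)\big)\leq t\|\mu\|_{ls}$ from \S\ref{sec:upper} (applied to $t\mu$), one gets $\big|\log\big(l_{\alpha_n}(X^{t\mu})/l_{\alpha_n}(X_0)\big)\big|\leq\max\{\log C^{*},\,t\|\mu\|_{ls}\}$, uniformly in $n$. The twist estimate established above, applied to $t\mu$, gives $|t_{\alpha_n}(X^{t\mu})-t_{\alpha_n}(X_0)|\leq C(t\|\mu\|_{ls})\max\{1,|\log l_{\alpha_n}(X_0)|\}$, again uniformly in $n$. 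Hence the normalized Fenchel--Nielsen coordinates of $X^{t\mu}$ lie in $l^{\infty}$, and the characterization theorem of \S\ref{sec:2} gives $X^{t\mu}\in T_{ls}(X_0)$. Since $t\geq 0$ was arbitrary, this yields $E^{t\mu}(X_0)=X^{t\mu}\in T_{ls}(X_0)$ for all $t\geq 0$. Note that the constants $C^{*}$ and $C(t\|\mu\|_{ls})$ depend on $t$, which is why the conclusion is a statement about each point of the path rather than a uniform statement about the path.

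The bulk of the geometric content is already packaged in Theorem~\ref{thm:bound-on-cuffs}, so most of the remaining work is formal: verifying that the four-way case split above is genuinely exhaustive (which rests on the scale invariance of angles and winding numbers) and that the auxiliary constants $C_0,C_0',C_1$ can be chosen independently of $n$ (they depend only on $L_0$, $t$ and $C$). The one step that is not pure bookkeeping is the a priori assertion that $E^{t\mu}$ is, for every $t\geq 0$, a well-defined earthquake with complete image; I expect this to be the main obstacle, and it should be dispatched by showing that $\|\mu\|_{ls}<\infty$ implies $\|\mu\|_{Th}<\infty$ on a surface carrying an upper bounded pants decomposition.
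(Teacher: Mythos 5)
Your main reduction is exactly the paper's argument: Theorem \ref{thm:earthquake-paths} is deduced by applying Theorem \ref{thm:bound-on-cuffs} to $t\mu$ for each fixed $t$, using that rescaling leaves the support (hence the angles and the winding numbers $w_{\alpha_n}(g_{comp})$) unchanged and only scales the transverse measure, and then combining the resulting lower bound on $l_{\alpha_n}(X^{t\mu})/l_{\alpha_n}(X_0)$ with the upper bound of \S\ref{sec:upper} and the twist estimate before invoking the Fenchel--Nielsen characterization of \S\ref{sec:2}; your four-way split on the size of $t\mu(\alpha_n)$ is the right way to see that the alternatives of Theorem \ref{thm:bound-on-cuffs} are exhaustive, and the $t$-dependence of the constants is harmless because the claim is pointwise in $t$.

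However, the step you yourself single out as ``the main obstacle'' is proposed to be settled by a false statement. It is not true that $\|\mu\|_{ls}<\infty$ together with an upper bounded pants decomposition forces $\|\mu\|_{Th}<\infty$: the paper's own example in \S\ref{sec:concl} is a counterexample, since there $\mu$ assigns weight $\tfrac12 l_{\alpha_k}(X_0)$ to curves $\beta_k$ with $w_{\alpha_k}(\beta_k)\,l_{\alpha_k}(X_0)\to\infty$, is length spectrum bounded, and yet a geodesic arc of bounded length crossing the winding strands of $\beta_k$ near $\alpha_k$ picks up measure comparable to $w_{\alpha_k}(\beta_k)\,l_{\alpha_k}(X_0)$, so $\|\mu\|_{Th}=\infty$; moreover hypothesis (1) of the theorem places no bound on winding in the acute direction, so Thurston boundedness cannot in general be recovered from the hypotheses either. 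The paper does not route well-definedness through Thurston boundedness at all: $E^{t\mu}$ is understood as in \S 3 (the piecewise isometry on the strata of $t\mu$, equivalently Thurston's earthquake realizing the induced boundary correspondence), and all estimates are obtained from the cocycle $B^{t\tilde\mu}=E^{t\tilde\mu}|_{O_1}\circ B$ and the finite-leaf approximation arguments of Lemma \ref{lem:tr-lengh-earthquake-measure} and \S\ref{sec:upper}, which make sense in that framework. So you should delete or replace that auxiliary claim; with it removed, your argument coincides with the paper's.
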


If there is no $C>0$ such that $w_{\alpha_n}(g_{comp})\leq C\frac{1}{l_{\alpha_n}(X_0)}$, it is still possible that the whole earthquake path remains in $T_{ls}(X_0)$. 

\begin{theorem}
\label{thm:partition_of_cuffs}
Assume that $\mathcal{P}$ is partitioned into $\mathcal{P}'$ and $\mathcal{P}''$ such that there exists $C>0$ with
$$
w_{\alpha_n}(g_{comp})\leq \frac{C}{l_{\alpha_n}(X_0)}
$$
for all $\alpha_n\in\mathcal{P}'$,
and that for $\alpha_n\in\mathcal{P}''$
$$
l_{\alpha_n}(X_0)\to 0
$$
and
$$
\mu (\alpha_n)=o(l_{\alpha_n}(X_0))
$$
as $n\to\infty$. Then the earthquake path $t\mapsto E^{t\mu}(X_0)=X^{t\mu}$ is contained in $T_{ls}(X_0)$ for all $t\geq 0$.
\end{theorem}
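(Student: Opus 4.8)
The plan is to reduce to $t=1$ and then verify the Fenchel--Nielsen coordinate characterisation of $T_{ls}(X_0)$ from \S\ref{sec:2} (cf.\ \cite{ALPS}): it suffices to bound, uniformly in $n$, the two normalised coordinates $|\log(l_{\alpha_n}(X^{\mu})/l_{\alpha_n}(X_0))|$ and $|t_{\alpha_n}(X^{\mu})-t_{\alpha_n}(X_0)|/\max\{1,|\log l_{\alpha_n}(X_0)|\}$. The reduction to $t=1$ is legitimate because every hypothesis of the theorem is stable under replacing $\mu$ by $t\mu$: one has $\|t\mu\|_{ls}=t\|\mu\|_{ls}<\infty$; the winding numbers $w_{\alpha_n}(g_{comp})$ depend only on the underlying geodesic lamination, not on its transverse measure; and for $\alpha_n\in\mathcal{P}''$ the conditions $l_{\alpha_n}(X_0)\to 0$ and $(t\mu)(\alpha_n)=t\,\mu(\alpha_n)=o(l_{\alpha_n}(X_0))$ persist for each fixed $t$. (Here, as in Theorem \ref{thm:earthquake-paths} and Theorem~2 of the Introduction, the hypothesis on $\mathcal{P}'$ is read as: each $\alpha_n\in\mathcal{P}'$ either meets the leaves of $\mu$ crossing it at angle $\le\frac{\pi}{2}$, or meets them at angle $>\frac{\pi}{2}$ with $w_{\alpha_n}(g_{comp})\le C/l_{\alpha_n}(X_0)$.) Thus it is enough to show $X^{\mu}\in T_{ls}(X_0)$ under the hypotheses.

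Fix once and for all $C_0>1$, $C_0'>C(L_0,\|\mu\|_{ls})$ and $C_1:=C+1$. The upper bound on lengths is immediate: by \S\ref{sec:upper}, $\log(l_{\alpha_n}(X^{\mu})/l_{\alpha_n}(X_0))\le\|\mu\|_{ls}$ for all $n$. For the lower bound I split $\mathcal{P}$ into three groups and apply the cuff-by-cuff estimates of \S\ref{sec:lower-bound}, summarised in Theorem \ref{thm:bound-on-cuffs}. \emph{(i)} For $\alpha_n\in\mathcal{P}'$: if $\mu(\alpha_n)>C_0l_{\alpha_n}(X_0)$ we are in case (1) of Theorem \ref{thm:bound-on-cuffs}; if $\mu(\alpha_n)<\frac{1}{C_0'}l_{\alpha_n}(X_0)$, in case (2); and if $\frac{1}{C_0'}l_{\alpha_n}(X_0)\le\mu(\alpha_n)\le C_0l_{\alpha_n}(X_0)$ then we are in case (3) when the crossing angle is $\le\frac{\pi}{2}$ and in case (4) when it is $>\frac{\pi}{2}$ (using $w_{\alpha_n}(g_{comp})\le C/l_{\alpha_n}(X_0)<C_1/l_{\alpha_n}(X_0)$). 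In every case $l_{\alpha_n}(X_0)/l_{\alpha_n}(X^{\mu})\le C^{*}$. \emph{(ii)} For $\alpha_n\in\mathcal{P}''$ with $n$ large: since $\mu(\alpha_n)/l_{\alpha_n}(X_0)\to 0<\frac{1}{C_0'}$, there is $N$ such that $\mu(\alpha_n)<\frac{1}{C_0'}l_{\alpha_n}(X_0)$ for all such $n>N$, so case (2) applies and again $l_{\alpha_n}(X_0)/l_{\alpha_n}(X^{\mu})\le C^{*}$. \emph{(iii)} The remaining cuffs form a finite set $\mathcal{F}\subset\mathcal{P}''$; for each $\alpha_n\in\mathcal{F}$, either $m=0$ (so $B^{\tilde{\mu}}=B$ and $l_{\alpha_n}(X^{\mu})=l_{\alpha_n}(X_0)>0$), or $m>0$, in which case $k_1<0<k_2$ and (\ref{eq:trace_under_earthq}) reads $trace(B^{\tilde{\mu}})=2\cosh\frac{m-l}{2}-\frac{2k_1}{k_2-k_1}[\cosh\frac{m+l}{2}-\cosh\frac{m-l}{2}]$ with first summand $\ge 2$ and, since $m,l>0$, strictly positive second summand, so $trace(B^{\tilde{\mu}})>2$ and $l_{\alpha_n}(X^{\mu})>0$. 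As $\mathcal{F}$ is finite, $\delta:=\min_{\alpha_n\in\mathcal{F}}l_{\alpha_n}(X^{\mu})>0$ and $l_{\alpha_n}(X_0)/l_{\alpha_n}(X^{\mu})\le L_0/\delta$ there. Combining \emph{(i)}--\emph{(iii)} with the upper bound gives $\sup_n|\log(l_{\alpha_n}(X^{\mu})/l_{\alpha_n}(X_0))|<\infty$; in particular $l_{\alpha_n}(X^{\mu})$ is comparable to $l_{\alpha_n}(X_0)$ with constants independent of $n$.

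The twists are then bounded exactly as in the section ``Bounding the twists of the cuffs'': from $|t_{\alpha_n}(X^{\mu})|\le\max\{1,|\log l_{\alpha_n}(X^{\mu})|\}+l_{\beta_n}(X^{\mu})$, the comparability just obtained gives $\max\{1,|\log l_{\alpha_n}(X^{\mu})|\}\le C'\max\{1,|\log l_{\alpha_n}(X_0)|\}$, while (\ref{eq:bound_on_tw}) together with $\mu(\beta_n)\le\|\mu\|_{ls}l_{\beta_n}(X_0)$ and $l_{\beta_n}(X_0)\le C\max\{1,|\log l_{\alpha_n}(X_0)|\}$ gives $l_{\beta_n}(X^{\mu})\le C''\max\{1,|\log l_{\alpha_n}(X_0)|\}$; since $0\le t_{\alpha_n}(X_0)<l_{\alpha_n}(X_0)\le L_0$, this yields $|t_{\alpha_n}(X^{\mu})-t_{\alpha_n}(X_0)|\le C'''\max\{1,|\log l_{\alpha_n}(X_0)|\}$ uniformly in $n$. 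Hence the normalised Fenchel--Nielsen coordinates of $X^{\mu}$ form an element of $l^{\infty}$, so $X^{\mu}\in T_{ls}(X_0)$; by the scaling reduction, $X^{t\mu}\in T_{ls}(X_0)$ for all $t\ge 0$.

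I expect the main obstacle to be step \emph{(iii)}. The cuffs of $\mathcal{P}''$ in the ``squeezed'' regime $\frac{1}{C_0'}l_{\alpha_n}(X_0)\le\mu(\alpha_n)\le C_0l_{\alpha_n}(X_0)$ with crossing angle $>\frac{\pi}{2}$ and large winding are precisely the ones not controlled by Theorem \ref{thm:bound-on-cuffs}: there (\ref{eq:lenght_m_close_to_l}) together with Lemma \ref{lem:bound_on_twists} gives only $l_{\alpha_n}(X^{\mu})\ge c\,l_{\alpha_n}(X_0)^{1+CC_0'}$, which is useless as $l_{\alpha_n}(X_0)\to 0$. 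The whole point of the hypothesis $\mu(\alpha_n)=o(l_{\alpha_n}(X_0))$ on $\mathcal{P}''$ is that it forces this bad set to be finite, after which the crude positivity $l_{\alpha_n}(X^{\mu})>0$ coming from the trace identity is enough.
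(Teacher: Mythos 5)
Your proof is correct and follows essentially the same route as the paper: the paper's own (very short) argument is exactly your reduction — for fixed $t$, the hypothesis $\mu(\alpha_n)=o(l_{\alpha_n}(X_0))$ on $\mathcal{P}''$ puts all but finitely many of those cuffs into the ``small measure'' case of the proof of Theorem \ref{thm:bound-on-cuffs}, while $\mathcal{P}'$ is covered by the remaining cases. You simply make explicit what the paper leaves implicit (the rescaling to a fixed $t$, the choice of threshold $1/C_0'$ rather than the paper's $1/2$, the harmlessness of the finitely many exceptional cuffs, and the twist bounds), which is a faithful, slightly more careful write-up of the same argument.
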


\begin{proof}
Since $\mu (\alpha_n)=o(l_{\alpha_n}(X_0))$, it follows that there exists $n_0=n_0(t)$ such that $t\mu (\alpha_n)<\frac{1}{2}l_{\alpha_n}(X_0)$ for $n\geq n_0$. The conclusion follows by the proof of Theorem \ref{thm:bound-on-cuffs}. 
\end{proof}

\vskip .2 cm

Finally, we establish that not all earthquake paths $t\mapsto E^{t\mu}(X_0)$ whose earthquake measure satisfy $\|\mu\|_{ls}<{\infty}$ remain in $T_{ls}(X_0)$ for all $t>0$.  We define $\mu$ by choosing the support to be a family of 
simple closed curves $\{\beta_k\}_{k=1}^{\infty}$ such that each $\beta_k$ is contained in the union $P_k^1\cup P_k^2$ of two pairs of pants $P_k^1,P_k^2\in\mathcal{P}$, that $\beta_k$ intersects the common cuff $\alpha_k$ of $P_k^1,P_k^2$ in a single point, that the angle between $\alpha_k$ and $\beta_k$ is greater than $\frac{\pi}{2}$, and that
$$
w_{\alpha_k}(\beta_k)l_{\alpha_k}(X_0)\to\infty
$$
as $k\to\infty$. We choose the measure $\mu$ such that
$$
\mu (\alpha_k)=\frac{1}{2}l_{\alpha_k}(X_0)
$$
or equivalently that
the Dirac measure on $\{\beta_k\}_{k=1}^{\infty}$ multiplied by $\frac{1}{2}l_{\alpha_k}(X_0)$. 

Note that $t\mu (\alpha_k)=\frac{t}{2}l_{\alpha_k}(X_0)$ which implies that $X^{t\mu}\in T_{ls}(X_0)$ for $t\geq 0$, $t\neq 2$. 
By (\ref{eq:trace_under_earthq}), we have that
$$
l_{\alpha_k}(X^{2\mu})\leq Ce^{-w_{\alpha_k}(\beta_k)l_{\alpha_k}(X_0)}l_{\alpha_k}(X_0)
$$
which implies that $X^{2\mu}=E^{2\mu}(X_0)\notin T_{ls}(X_0)$. 
We note that $X^{2\mu}$ is a hyperbolic surface homeomorphic to $X_0$ where no pinching occurred.

\begin{remark}We can hypothetically think of $X^{2\mu}$ as belonging to some ``augmentation'' of $T_{ls}(X_0)$ in contrast to the augmentation of Teichm\"uller spaces of finite surfaces where only pinched surfaces appear. This idea will be pursued elsewhere.
\end{remark}

Note that this behavior that an earthquake path leaves $T_{ls}(X_0)$ at some time $t$ and returns in $T_{ls}(X_0)$ afterwards can be repeated for infinitely many values of $t$ by simply choosing different multiples of the Dirac measures along different subsequences of $\beta_k$. In particular, an earthquake path $t\mapsto E^{t\mu}(X_0)$ can leave and return to $T_{ls}(X_0)$ at infinitely many points $t$.

\end{document}